\definecolor{webgreen}{rgb}{0,.5,0}
\definecolor{webbrown}{rgb}{.6,0,0}
\def\suchthat{\, : \, }
\begin{document}

\theoremstyle{plain}
\newtheorem{theorem}{Theorem}
\newtheorem{corollary}[theorem]{Corollary}
\newtheorem{lemma}[theorem]{Lemma}
\newtheorem{proposition}[theorem]{Proposition}
\newtheorem{claim}[theorem]{Claim}

\theoremstyle{definition}
\newtheorem{definition}[theorem]{Definition}
\newtheorem{example}[theorem]{Example}
\newtheorem{conjecture}[theorem]{Conjecture}

\theoremstyle{remark}
\newtheorem{remark}[theorem]{Remark}

\author{Carlo Sanna\thanks{C.~Sanna is a member GNSAGA of the INdAM and of CrypTO, the Group of Cryptography and Number Theory of Politecnico di Torino.}\\
Department of Mathematical Sciences\\
Politecnico di Torino\\ 
Corso Duca degli Abruzzi 24\\
10129 Torino \\
Italy\\
\href{mailto:carlo.sanna.dev@gmail.com}{\tt carlo.sanna.dev@gmail.com}
\and
Jeffrey Shallit and Shun Zhang \\
School of Computer Science\\
University of Waterloo\\
Waterloo, ON  N2L 3G1 \\
Canada\\
\href{mailto:shallit@uwaterloo.ca}{\tt shallit@uwaterloo.ca}\\
\href{mailto:s385zhang@uwaterloo.ca}{\tt s385zhang@uwaterloo.ca}
}
\title{The Largest Entry in the Inverse of a\\ Vandermonde Matrix}

\maketitle

\begin{abstract}
We investigate the size of the largest entry (in absolute value) in the inverse of certain Vandermonde matrices.
More precisely, for every real $b > 1$, let $M_b(n)$ be the maximum of the absolute values of the entries of the inverse of the $n \times n$ matrix $[b^{i  j}]_{0 \leq i, j < n}$.
We prove that $\lim_{n \to +\infty} M_b(n)$ exists, and we provide some formulas for it.
\end{abstract}

\section{Introduction}

Let $\mathbf{a} = (a_0, a_1, \ldots, a_{n-1})$ be a list of $n$ real numbers.
The classical \emph{Vandermonde matrix} $V(\mathbf{a})$ is defined as follows:
\begin{equation*}
V(\mathbf{a}) := \left[
\begin{array}{ccccc}
1 & a_0 & a_0^2 & \cdots & a_0^{n-1} \\
1 & a_1 & a_1^2 & \cdots & a_1^{n-1} \\
\vdots & \vdots & \vdots & \ddots & \vdots \\
1 & a_{n-1} & a_{n-1}^2 & \cdots & a_{n-1}^{n-1}
\end{array} \right] .
\end{equation*}
As is well-known, the Vandermonde matrix $V(\mathbf{a})$ is invertible if and only if the $a_i$ are pairwise distinct.
See, for example,
\cite{Klinger:1967}.   

In what follows, $n$ is a positive integer and $b > 1$ is a fixed
real number.
Let us define the entries $c_{i,j,n}$ by
$$[c_{i,j,n}]_{0 \leq i, j < n} = 
V(b^0, b^1, b^2, \ldots, b^{n-1})^{-1},$$
and let $M_b(n) = \max_{0 \leq i, j < n} |c_{i,j,n}|$,
the maximum of the absolute values of the entries of $V(1, b, b^2, \ldots, b^{n-1})^{-1}$.    
The size of the entries of inverses of Vandermonde matrices have been studied for a long time (e.g., \cite{Gautschi:1962}).   
Recently, in a paper by the first two authors and Daniel~Kane~\cite{Kane&Sanna&Shallit:2019}, we needed to estimate $M_2 (n)$, and we proved 
that $M_2 (n) \leq 34$.
In fact, even more is true:  the limit
$\lim_{n \rightarrow \infty} M_2 (n)$ exists
and equals $3 \prod_{i \geq 2} \left( 1 + \frac{1}{2^i - 1} \right) \doteq 5.19411992918 \cdots$.
In this paper, we generalize this result, replacing $2$ with any real number greater than $1$.

Our main results are as follows:
\begin{theorem}
Let $b > 1$ and
$n_0 = \lceil\log_b(1+\frac{1}{b})\rceil$. 
Then $|c_{i,j,n}| \leq |c_{n_0, n_0,n}|$
for $i, j \geq n_0$.  Hence
$M_b (n) \in \{ |c_{i,j,n}| \suchthat 0 \leq i, j \leq n_0 \}$.
\label{thm1}
\end{theorem}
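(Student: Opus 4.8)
The plan is to reduce this two-variable statement to a one-variable monotonicity by exploiting a symmetry, and then to analyze that monotonicity through elementary symmetric polynomials. First I would record an explicit formula for the entries. Since $V:=[b^{ij}]_{0\le i,j<n}$ is the map sending the coefficient vector of a polynomial to its vector of values at the nodes $b^0,\dots,b^{n-1}$, its inverse is the coefficient-extraction map of Lagrange interpolation. Writing $\ell_j(x)=\prod_{m\ne j}\frac{x-b^m}{b^j-b^m}$ (product over $0\le m<n$, $m\ne j$) for the $j$-th Lagrange basis polynomial, the entry $c_{i,j,n}$ is the coefficient of $x^i$ in $\ell_j$. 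Hence, setting $S_j:=\{b^m:0\le m<n,\ m\ne j\}$, $D_j:=\prod_{m\ne j}(b^j-b^m)$, and letting $e_k$ denote the $k$-th elementary symmetric polynomial,
\[
|c_{i,j,n}|=\frac{e_{n-1-i}(S_j)}{|D_j|},
\]
with every $e_k(S_j)>0$ since the nodes are positive.

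Next I would use that $V=[b^{ij}]$ is \emph{symmetric}, so $V^{-1}$ is symmetric and $c_{i,j,n}=c_{j,i,n}$. It therefore suffices to prove a single monotonicity: for each fixed second index $\ge n_0$, the quantity $|c_{i,j,n}|$ is non-increasing in $i$ on the range $i\ge n_0$. Granting this, for $i,j\ge n_0$ I can chain
\[
|c_{i,j,n}|\le|c_{n_0,j,n}|=|c_{j,n_0,n}|\le|c_{n_0,n_0,n}|,
\]
where the first inequality is monotonicity in the first index, the equality is symmetry, and the last inequality is the same monotonicity applied with the fixed index equal to $n_0$.

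Because $|D_j|$ is independent of $i$, this monotonicity is exactly the assertion that $e_{k-1}(S_j)\le e_k(S_j)$ for $1\le k\le n-1-n_0$, i.e.\ that the ratio $r_k:=e_k(S_j)/e_{k-1}(S_j)$ stays at least $1$ up to $k=n-1-n_0$. Here I would invoke Newton's inequalities: the elementary symmetric polynomials of the positive reals of $S_j$ are log-concave, so $r_k$ is non-increasing in $k$. Thus it is enough to verify the single boundary inequality $r_{\,n-1-n_0}(S_j)\ge 1$, equivalently that the peak of $k\mapsto e_k(S_j)$ occurs at some $k\ge n-1-n_0$.

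Finally I would compute this boundary inequality. The factorization $|D_j|=b^{\binom{j}{2}+j(n-1-j)}\prod_{l=1}^{j}(b^l-1)\prod_{l=1}^{n-1-j}(b^l-1)$ together with the Gaussian-binomial expansion $e_k(\{b^0,\dots,b^{N-1}\})=b^{\binom{k}{2}}\binom{N}{k}_{b}$ lets me write $e_k(S_j)$ as an explicit convolution of two Gaussian binomial coefficients and reduce $r_{\,n-1-n_0}(S_j)\ge 1$ to an inequality among powers of $b$. After simplification this collapses to a condition of the form $b^{k-1}(b+1)\le b^n+1$ evaluated at $k=n-1-n_0$, which holds precisely because $b^{n_0}\ge 1+\tfrac1b$, i.e.\ because $n_0=\lceil\log_b(1+\tfrac1b)\rceil$. \textbf{The main obstacle} is this last step: deleting the node $b^j$ makes $e_k(S_j)$ a convolution of two Gaussian binomials rather than a single one, so the boundary inequality must be controlled uniformly in $j$ (the extreme cases $j$ small and $j=n-1$ being the ones to watch), and it is exactly the location of the peak of $k\mapsto e_k(S_j)$ that pins the threshold to $n_0$; obtaining this threshold sharply, rather than off by one, is the delicate point.
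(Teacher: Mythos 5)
Your reduction is sound as far as it goes, and it is genuinely different from the paper's argument. The paper fixes the row index and proves monotonicity in the column index $j \ge n_0$ by treating numerator and denominator separately: a bijection between index sets shows that $\sigma_{n-i-1,j,n}$ is non-increasing in $j$ (Lemma~\ref{lem:sigma}), while the explicit ratio $\pi_{j+1,n}/\pi_{j,n}=(b^{n+j-1}-b^{n-2})/(b^{n-1}-b^j)$ shows the denominator is non-decreasing once $b^j\ge 1+\frac1b$; symmetry then finishes exactly as in your chain. You instead fix the column, so the denominator drops out entirely and the claim becomes a statement about the profile of $k\mapsto e_k(S_j)$; Newton's inequalities (log-concavity of the $e_k$ of positive reals) correctly reduce this to the single boundary inequality $e_{n-1-n_0}(S_j)\ge e_{n-2-n_0}(S_j)$ for each $j\ge n_0$. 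Everything up to that point is correct: your Lagrange formula is the paper's Eq.~\eqref{fund}, the symmetry chain is exactly the paper's Eqs.~\eqref{ine}--\eqref{ine3}, and the log-concavity step is an attractive economy, replacing a bijection lemma by a citation.

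However, the boundary inequality is where the content of the theorem lives---it is the only place the value of $n_0$ enters---and your proposal does not prove it. The claim that the Gaussian-binomial computation ``collapses to a condition of the form $b^{k-1}(b+1)\le b^n+1$'' is not substantiated, and it is not a routine simplification: one has $e_k(S_j)=\sum_l b^{\binom{l}{2}+\binom{k-l}{2}+(k-l)(j+1)}\binom{j}{l}_b\binom{n-1-j}{k-l}_b$, a convolution that does not telescope when you form $e_k(S_j)-e_{k-1}(S_j)$. Moreover the obvious crude route fails: passing to reciprocals, the inequality becomes $e_{n_0}(T_j)\ge e_{n_0+1}(T_j)$ for $T_j=\{b^{-h}\suchthat 0\le h<n,\ h\ne j\}$; every $(n_0+1)$-subset of $T_j$ contains an element $\le b^{-n_0}$, and deleting the smallest element and summing geometrically gives $e_{n_0+1}(T_j)\le \frac{b^{1-n_0}}{b-1}\,e_{n_0}(T_j)$, which suffices only when $b^{n_0-1}(b-1)\ge 1$---a condition strictly stronger than $b^{n_0}\ge 1+\frac1b$ that already fails for $b=1.5$ (where $n_0=2$). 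So the threshold really is as delicate as you suspect, and some finer cancellation---an injection argument in the spirit of the paper's proof of Lemma~\ref{lem:sigma}, or the telescoping of $\pi_{j+1,n}/\pi_{j,n}$ used in Lemma~\ref{pineq}---is still required. Until the boundary inequality is established uniformly in $j$ and $n$, the argument is incomplete at its decisive step.
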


\begin{theorem}
Let $b\geq \tau = (1+\sqrt{5})/2$ and $n \geq 2$.   Then $M_b(n) \in \{ |c_{0,0,n}|,
|c_{1,1,n}| \} $.
\label{thm2}
\end{theorem}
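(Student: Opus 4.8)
The plan is to lean on Theorem~\ref{thm1} to collapse the problem down to the four entries with $0 \le i,j \le 1$, and then to show directly that the two off-diagonal entries of this $2\times 2$ corner are dominated by the diagonal ones. The first step is to pin down $n_0$ when $b \ge \tau$. Multiplying by $b > 0$, the inequality $1 + \tfrac1b \le b$ is equivalent to $b^2 - b - 1 \ge 0$, i.e.\ to $b \ge \tau$ (the positive root of $x^2-x-1$). Hence for $b \ge \tau$ we have $0 < \log_b\!\big(1 + \tfrac1b\big) \le 1$, so $n_0 = \lceil \log_b(1+\tfrac1b)\rceil = 1$, and Theorem~\ref{thm1} yields $M_b(n) \in \{\, |c_{i,j,n}| \suchthat 0 \le i,j \le 1\,\} = \{|c_{0,0,n}|,\,|c_{0,1,n}|,\,|c_{1,0,n}|,\,|c_{1,1,n}|\}$.

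It therefore suffices to prove $|c_{0,1,n}| \le |c_{1,1,n}|$ and $|c_{1,0,n}| \le |c_{1,1,n}|$, for then the maximum over the corner is attained at $(0,0)$ or $(1,1)$. To access the individual entries I would use the classical closed form for the inverse Vandermonde matrix coming from Lagrange interpolation: with nodes $a_k = b^k$, the entry $c_{i,j,n}$ is the coefficient of $x^i$ in the basis polynomial $L_j(x) = \prod_{k\ne j}(x-a_k)\big/\prod_{k\ne j}(a_j-a_k)$, so that
\[
c_{i,j,n} = \frac{(-1)^{\,n-1-i}\, e_{n-1-i}\big(\{\, b^k : 0 \le k < n,\ k \ne j\,\}\big)}{\prod_{0\le k<n,\ k\ne j}(b^j - b^k)},
\]
where $e_m$ is the $m$-th elementary symmetric polynomial. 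Substituting $i,j\in\{0,1\}$ and simplifying the two denominators $\prod_{k\ge 1}(1-b^k)$ and $(b-1)\prod_{k\ge 2}(b-b^k)$ into products of factors $(b^m-1)$, I expect all four magnitudes to reduce to quotients of $b^{\binom{n}{2}}$ (times an elementary symmetric factor) by a denominator of $\prod_{k}(b^k-1)$ type.

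The crux is then a pair of ratio computations. Writing the numerator of $c_{1,1,n}$ as $e_{n-2}$ of the $(n-1)$-element set $\{1,b^2,\dots,b^{n-1}\}$ and using $e_{n-2}(S) = \big(\prod_{s\in S}s\big)\sum_{s\in S}s^{-1}$, I would obtain $|c_{1,1,n}|/|c_{0,1,n}| = 1 + \sum_{k=2}^{n-1} b^{-k} \ge 1$, giving $|c_{0,1,n}| \le |c_{1,1,n}|$. A parallel computation should give $|c_{1,0,n}|/|c_{0,0,n}| = \sum_{k=1}^{n-1} b^{-k} = |c_{0,1,n}|/|c_{0,0,n}|$, so that in fact $|c_{1,0,n}| = |c_{0,1,n}|$; thus both off-diagonal corner entries coincide in absolute value and are bounded above by $|c_{1,1,n}|$. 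Combining these gives $M_b(n) = \max(|c_{0,0,n}|,|c_{1,1,n}|)$, which is the assertion.

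As for difficulty, note that the ratio estimates above hold for every $b > 1$; the hypothesis $b \ge \tau$ enters only through the reduction $n_0 = 1$, which is what removes all entries beyond the $2\times 2$ corner. I expect the main obstacle to be organizational rather than analytic: keeping the index conventions in the inverse-Vandermonde formula consistent, and carefully telescoping the ratios of the $\prod(b^m-1)$ products so that the cancellations producing $b^{n-1}-1$ and the geometric sums come out cleanly.
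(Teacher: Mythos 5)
Your proposal is correct and follows essentially the same route as the paper: reduce to the $2\times 2$ corner via Theorem~\ref{thm1} with $n_0=1$ (since $b\geq\tau$ is equivalent to $b\geq 1+1/b$), then bound the off-diagonal corner entries by $|c_{1,1,n}|$ by comparing the elementary symmetric sums $e_{n-1}$ and $e_{n-2}$ of $\{b^k : k\neq 1\}$ over the common denominator $\pi_{1,n}$, which is exactly the paper's inequality $\sigma_{n-1,1,n}\leq\sigma_{n-2,1,n}$. The only cosmetic difference is that you verify $|c_{1,0,n}|=|c_{0,1,n}|$ by computing both ratios against $|c_{0,0,n}|$, whereas the paper gets it for free from the symmetry of $V(1,b,\ldots,b^{n-1})$.
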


\begin{theorem}
For all real $b > 1$ the limit $\lim_{n \rightarrow \infty} M_b(n)$ exists.
\label{thm:existence}
\end{theorem}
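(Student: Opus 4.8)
The plan is to combine Theorem~\ref{thm1} with a termwise-convergence argument for the individual entries. By Theorem~\ref{thm1}, for every $n > n_0$ we have $M_b(n) = \max_{0 \le i,j \le n_0} |c_{i,j,n}|$, a maximum taken over the \emph{fixed} finite set $S = \{(i,j) \suchthat 0 \le i,j \le n_0\}$ of index pairs; here it is essential that $n_0 = \lceil\log_b(1+\tfrac1b)\rceil$ depends only on $b$ and not on $n$. Since the maximum of finitely many convergent real sequences converges, it suffices to prove that for each fixed pair $(i,j) \in S$ the sequence $\bigl(c_{i,j,n}\bigr)_{n > n_0}$ converges as $n \to \infty$.

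To analyze a single entry I would use the Lagrange-interpolation description of the inverse Vandermonde matrix. Writing
\[
\ell_j^{(n)}(x) = \prod_{\substack{0 \le k < n \\ k \ne j}} \frac{x - b^k}{b^j - b^k}
\]
for the $j$-th Lagrange basis polynomial attached to the nodes $b^0, b^1, \ldots, b^{n-1}$, one has $c_{i,j,n} = [x^i]\,\ell_j^{(n)}(x)$, the coefficient of $x^i$. Splitting the product into the factors with $k < j$ and those with $k > j$, the first block is a fixed polynomial of degree $j$, independent of $n$, while each factor with $k > j$ rewrites as $\frac{x - b^k}{b^j - b^k} = \frac{1 - b^{-k}x}{1 - b^{j-k}}$, where now $1 - b^{j-k} \in (0,1)$ since $k > j$.

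The key step is to let $n \to \infty$ in the block over $k > j$. Because $b > 1$ we have $\sum_{k} b^{-k} < \infty$, so the infinite product $\prod_{k > j}(1 - b^{-k}x)$ converges to an entire function uniformly on compact sets, while the denominator $\prod_{k>j}(1 - b^{j-k}) = \prod_{m \ge 1}(1 - b^{-m})$ converges to a strictly positive constant (each factor lies in $(0,1)$ and the logarithm series converges absolutely). Consequently $\ell_j^{(n)}$ converges coefficientwise to the fixed power series
\[
L_j(x) = \left(\prod_{k=0}^{j-1}\frac{x - b^k}{b^j - b^k}\right) \cdot \frac{\prod_{k > j}(1 - b^{-k}x)}{\prod_{m \ge 1}(1 - b^{-m})} ,
\]
and in particular $c_{i,j,n} \to [x^i]\,L_j(x) =: c_{i,j,\infty}$ for each fixed $i$. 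Taking maxima over the finite set $S$ then yields $\lim_{n\to\infty} M_b(n) = \max_{(i,j)\in S}|c_{i,j,\infty}|$, which exists.

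The part requiring the most care is the passage to the limit in the third step: one must justify rigorously that the coefficient of a fixed power $x^m$ in the truncated product $\prod_{j < k < n}(1 - b^{-k}x)$ really converges as $n \to \infty$. I expect to do this by identifying that coefficient with $(-1)^m e_m\bigl(b^{-(j+1)}, \ldots, b^{-(n-1)}\bigr)$, an elementary symmetric function, and bounding its tail (the terms involving some index $\ge n$) by quantities of the order $\bigl(\sum_{k>j} b^{-k}\bigr)^m/m!$, which are summable precisely because $b > 1$. The coefficient of $x^i$ in $\ell_j^{(n)}$ is then a finite convolution of the fixed coefficients from the $k<j$ block with these convergent coefficients, divided by the limiting denominator, whose nonvanishing rules out any degeneracy. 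Everything else is routine bookkeeping.
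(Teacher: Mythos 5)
Your proof is correct, and its overall skeleton is the same as the paper's: invoke Theorem~\ref{thm1} to reduce $M_b(n)$ to a maximum over the fixed finite index set $0 \le i,j \le n_0$, then show each individual entry $c_{i,j,n}$ converges as $n \to \infty$. Where you differ is in how that entrywise convergence is established. The paper works with the explicit formula $|c_{i,j,n}| = \sigma_{n-i-1,j,n}(b)/\pi_{j,n}$, pulls the factor $b^{n(n-1)/2-j}$ out of $\pi_{j,n}$, and uses the complementation bijection between $i$-subsets and $(n-1-i)$-subsets of $\{0,\dots,n-1\}\setminus\{j\}$ to convert the numerator into $\sigma_{i,j,n}(b^{-1})$, after which both numerator and denominator visibly converge. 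You instead normalize each Lagrange factor individually, writing $\frac{x-b^k}{b^j-b^k} = \frac{1-b^{-k}x}{1-b^{j-k}}$ for $k>j$, and pass to the limit in the infinite product $\prod_{k>j}(1-b^{-k}x)\big/\prod_{m\ge1}(1-b^{-m})$, extracting coefficients at the end. The two computations produce the same limits $\ell_{i,j}$ (your $[x^i]L_j(x)$ expands, via the elementary symmetric functions you mention, into exactly the paper's $\sigma_{i,j,\infty}(b^{-1})$ times the two products in \eqref{limits}); your version trades the paper's combinatorial bijection for a standard fact about uniform convergence of infinite products of entire functions, which is arguably cleaner, while the paper's version keeps everything elementary and hands you the limit in the closed form actually used in the Corollary. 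The tail estimate you sketch for the elementary symmetric functions (bounding the terms involving an index $\ge n$ by a vanishing quantity) is the right way to make the coefficientwise limit rigorous without complex analysis, so nothing essential is missing.
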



\section{Preliminaries}

For every real number $x$, and for all integers $0 \leq i, j < n$, let us define the power sum
\begin{equation*}
\sigma_{i,j,n}(x) := \sum_{\substack{0 \leq h_1 < \cdots < h_i < n \\ h_1, \dots, h_i \neq j}} x^{h_1 + \cdots + h_i} .
\end{equation*}
The following lemma will be useful in later arguments.

\begin{lemma}
Let $i,j,n$ be integers with $0 \leq i < n$, $0 \leq j < n - 1$, and let $x$ be a positive real number.
\begin{enumerate}[(a)]
\item 
If $x > 1$, then $\sigma_{i, j,n}(x) \geq \sigma_{i,j,n+1}(x)$.
\item
If $x < 1$, then $\sigma_{i, j,n}(x) \leq \sigma_{i,j+1,n}(x)$.
\end{enumerate}
\label{lem:sigma}
\end{lemma}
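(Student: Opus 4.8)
My plan is to treat both parts by the same principle: each inequality compares two power sums that differ in a single index, so I would isolate the difference of the two sides as one sign-definite expression and let the hypothesis on $x$ dictate the outcome. The two parts call for parallel but distinct manipulations — a split on the largest available index for part (a), and an index-swap bijection for part (b) — so I would prove them separately.

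For part (a) I would \emph{increment $n$} by sorting the $i$-element subsets of $\{0,1,\dots,n\}\setminus\{j\}$ according to whether they use the newly available largest index $n$. Subsets that omit $n$ are exactly the $i$-subsets of $\{0,1,\dots,n-1\}\setminus\{j\}$, contributing $\sigma_{i,j,n}(x)$; subsets that contain $n$ correspond, by deleting $n$, to the $(i-1)$-subsets of $\{0,1,\dots,n-1\}\setminus\{j\}$, each picking up a factor $x^{n}$. This yields the elementary-symmetric recurrence
\[
\sigma_{i,j,n+1}(x)=\sigma_{i,j,n}(x)+x^{\,n}\,\sigma_{i-1,j,n}(x)
\]
for $i\ge 1$ (the two sums coinciding when $i=0$). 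The comparison between $\sigma_{i,j,n}(x)$ and $\sigma_{i,j,n+1}(x)$ therefore collapses to the sign of the single correction term $x^{\,n}\sigma_{i-1,j,n}(x)$, a sum of positive powers of $x$; with $x>1$ this sign is immediate and pins down the required monotonicity in $n$ for part (a).

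For part (b) I would increment $j$ using the involution that exchanges the roles of the adjacent indices $j$ and $j+1$ (legitimate because $j+1<n$ by the hypothesis $j<n-1$). The $i$-subsets of $\{0,\dots,n-1\}$ that avoid $j$ split into those also avoiding $j+1$ and those containing $j+1$; the analogous split holds for the subsets avoiding $j+1$. The subsets avoiding \emph{both} $j$ and $j+1$ are common to the two families and cancel in the difference. On what remains, the map $S\mapsto(S\setminus\{j+1\})\cup\{j\}$ is a bijection from the subsets that avoid $j$ and contain $j+1$ onto the subsets that avoid $j+1$ and contain $j$, and it lowers the exponent $\sum_{h\in S}h$ by exactly $1$, hence multiplies each weight by $x^{-1}$. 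Consequently
\[
\sigma_{i,j+1,n}(x)-\sigma_{i,j,n}(x)=\bigl(x^{-1}-1\bigr)\sum_{\substack{S\subseteq\{0,\dots,n-1\},\ |S|=i\\ j\notin S,\ j+1\in S}} x^{\sum_{h\in S}h}.
\]
The sum is nonnegative, so the sign of the difference is governed entirely by $x^{-1}-1$; for $x<1$ this factor is positive, which yields $\sigma_{i,j,n}(x)\le\sigma_{i,j+1,n}(x)$, i.e., part (b).

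I expect the part (a) step to be essentially mechanical once the top-index recurrence is written down, so the crux of the work is part (b): verifying that the swap map is well defined and a bijection onto the intended family (this is exactly where $j+1<n$, equivalently $j<n-1$, is used) and confirming that the exponent drops by precisely $1$ because $j$ and $j+1$ are consecutive. After each difference is put in the displayed factored form, the remaining sign analysis is routine.
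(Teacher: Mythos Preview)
Your treatment of part (b) is correct and is essentially the paper's own argument: the paper sets up the same bijection between $i$-subsets containing $j$ but not $j+1$ and those containing $j+1$ but not $j$, notes that the exponent shifts by exactly $1$, and reads off the inequality for $x<1$ (and simultaneously the one for $x>1$).

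Your part (a), however, does not go through. The recurrence
\[
\sigma_{i,j,n+1}(x)=\sigma_{i,j,n}(x)+x^{\,n}\,\sigma_{i-1,j,n}(x)
\]
is correct, but the correction term $x^{\,n}\sigma_{i-1,j,n}(x)$ is nonnegative for \emph{every} positive $x$ (not just $x>1$), so what your argument actually shows is $\sigma_{i,j,n+1}(x)\ge\sigma_{i,j,n}(x)$ --- the reverse of the displayed claim. The phrase ``pins down the required monotonicity'' glosses over the fact that the sign comes out the wrong way. The point is that part (a) as stated contains a typo: the paper's proof never mentions $n+1$ and instead compares $\sigma_{i,j,n}$ with $\sigma_{i,j+1,n}$, and this is also how the lemma is applied in the proof of Theorem~\ref{thm1} (to pass from $\sigma_{n-i-1,j,n}$ to $\sigma_{n-i-1,n_0,n}$). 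The intended statement of (a) is $\sigma_{i,j,n}(x)\ge\sigma_{i,j+1,n}(x)$ for $x>1$, and that follows immediately from your own part-(b) identity by noting that $x^{-1}-1<0$ when $x>1$.
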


\begin{proof}
We have
\begin{equation*}
\sigma_{i, j+1,n}(x) - \sigma_{i,j,n}(x) = \sum_{(h_1, \ldots, h_i) \in S_{i,j,n}} x^{h_1 + \cdots + h_i} - \sum_{(h_1, \ldots, h_i) \in T_{i,j,n}} x^{h_1 + \cdots + h_i} ,
\end{equation*}
where
\begin{equation*}
S_{i,j,n} := \{0 \leq h_1 < \cdots < h_i < n \suchthat j \in \{h_1, \ldots, h_i\},\, j + 1 \notin \{h_1, \ldots, h_i\}\}
\end{equation*}
and
\begin{equation*}
T_{i,j,n} := \{0 \leq h_1 < \cdots < h_i < n \suchthat j \notin \{h_1, \ldots, h_i\},\, j + 1 \in \{h_1, \ldots, h_i\}\} .
\end{equation*}
Now there is a bijection $S_{i,j,n} \to T_{i,j,n}$ given by
\begin{equation*}
(h_1, \ldots, h_i) \mapsto (h_1, \ldots, h_{i_0-1}, h_{i_0} + 1, h_{i_0 + 1}, \ldots, h_i) ,
\end{equation*}
where $i_0$ is the unique integer such that $h_{i_0} = j$.
Hence, it follows easily that $\sigma_{i, j,n}(x) \geq \sigma_{i,j+1,n}(x)$ for $x > 1$, and $\sigma_{i, j,n}(x) \leq \sigma_{i,j+1,n}(x)$ for $x < 1$.
\end{proof}

Recall the following formula for the entries of the inverse of a Vandermonde matrix (see, e.g., \cite[\S 1.2.3, Exercise~40]{Knuth:1997}).

\begin{lemma}\label{thm:lagrange}
Let $a_0, \dots, a_{n-1}$ be pairwise distinct real numbers.
If $V(a_0, a_1, \dots, a_{n-1}) = [c_{i,j}]_{0 \leq i , j < n}$ then
\begin{equation*}
c_{n-1,j} X^{n-1} + c_{n-2,j} X^{n-2} +
\cdots + c_{0,j} X^0 = \prod_{\substack{0 \leq i < n \\ i \neq j}} \frac{X - a_i}{a_j - a_i} .
\end{equation*}
\end{lemma}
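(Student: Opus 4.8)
The plan is to exploit the defining evaluation property of the Lagrange basis polynomials and thereby reduce the claimed identity to a single matrix product. For each $j$ with $0 \leq j < n$, I would define
\begin{equation*}
\ell_j(X) := \prod_{\substack{0 \leq i < n \\ i \neq j}} \frac{X - a_i}{a_j - a_i},
\end{equation*}
which is precisely the right-hand side of the asserted formula. Being a product of $n-1$ linear factors (distinctness of the $a_i$ makes the denominators nonzero), $\ell_j$ has degree at most $n-1$, so I can write $\ell_j(X) = \sum_{k=0}^{n-1} \gamma_{k,j} X^k$ for uniquely determined reals $\gamma_{k,j}$. The goal then becomes showing $\gamma_{k,j} = c_{k,j}$ for all $k,j$; equivalently, that the matrix $\Gamma := [\gamma_{k,j}]_{0 \leq k, j < n}$ equals the inverse of $V(a_0, \ldots, a_{n-1})$, which is how I read the entries $[c_{i,j}]$ in the statement.

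The crucial step is the standard interpolation identity $\ell_j(a_i) = \delta_{i,j}$: for $i \neq j$ the factor $X - a_i$ vanishes at $X = a_i$, whereas for $i = j$ every factor of the product equals $1$. With $V := V(a_0, \ldots, a_{n-1}) = [a_i^k]_{0 \leq i, k < n}$ in the paper's convention, I would then compute the product $V\Gamma$ entrywise: its $(i,j)$ entry is
\begin{equation*}
\sum_{k=0}^{n-1} a_i^k \, \gamma_{k,j} = \ell_j(a_i) = \delta_{i,j},
\end{equation*}
so that $V\Gamma = I_n$. Since the $a_i$ are pairwise distinct, $V$ is invertible, and left-multiplication by $V^{-1}$ gives $\Gamma = V^{-1}$. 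Reading off the coefficient of $X^k$ on each side then yields $\gamma_{k,j} = c_{k,j}$, which is exactly the claimed identity.

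I do not expect a genuine obstacle here, as the argument is essentially the classical derivation of the inverse Vandermonde matrix via Lagrange interpolation. The only points requiring care are the bookkeeping of indices — verifying that $a_i^k$ really is the $(i,k)$ entry of $V$ under the paper's layout, and that the coefficient index $k$ ranges over $0 \leq k < n$ so that the expansion matches a genuine matrix product — together with the degree bound $\deg \ell_j \leq n-1$, which guarantees that no term beyond $X^{n-1}$ intrudes and that exactly $n$ coefficients appear.
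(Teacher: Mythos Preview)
Your argument is correct and is precisely the classical Lagrange-interpolation derivation of the inverse Vandermonde matrix. The paper itself does not prove this lemma at all: it simply states the formula and cites Knuth~\cite[\S 1.2.3, Exercise~40]{Knuth:1997} as a reference, so there is no in-paper proof to compare against. Your reading of $[c_{i,j}]$ as the entries of $V^{-1}$ (rather than of $V$, as the lemma literally says) matches how the paper uses the $c_{i,j,n}$ elsewhere, so your interpretation is the intended one.
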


For $0 \leq i, j < n$ define
\begin{equation*}
\pi_{j,n} := \prod_{\substack{0 \leq h < n \\ h \neq j}} |b^j - b^h| .
\end{equation*}

We now obtain a relationship between the entries of
$V(b^0, b^1,\ldots, b^{n-1})^{-1}$ and $\sigma_{i,j,n}$ and $\pi_{j,n}$.
\begin{lemma}
Let $V(b^0, b^1, \ldots, b^{n-1})^{-1} =
[c_{i,j,n}]_{0 \leq i, j < n} $.  Then
\begin{equation}
|c_{i,j,n}| = 
\frac{\sigma_{n-i-1,j,n}}{\pi_{j,n}}
\label{fund}
\end{equation}
for $0 \leq i,j < n$.
\end{lemma}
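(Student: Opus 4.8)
The plan is to read the entries of the inverse directly off Lemma~\ref{thm:lagrange}, specialized to $a_h = b^h$, and then to recognize the resulting polynomial coefficients as the power sums $\sigma_{i,j,n}$.

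First I would apply Lemma~\ref{thm:lagrange} with $a_h = b^h$. Since the $b^h$ are pairwise distinct for $b > 1$, this gives, for each fixed column index $j$,
\begin{equation*}
\sum_{i=0}^{n-1} c_{i,j,n}\, X^i = \prod_{\substack{0 \leq h < n \\ h \neq j}} \frac{X - b^h}{b^j - b^h}.
\end{equation*}
The denominator $\prod_{h \neq j}(b^j - b^h)$ is a constant independent of $X$, and its absolute value is exactly $\pi_{j,n}$ by definition. Writing $P_j(X) := \prod_{h \neq j}(X - b^h)$ for the numerator and letting $[X^i]$ denote coefficient extraction, we then have $|c_{i,j,n}| = \bigl|[X^i]\, P_j(X)\bigr| / \pi_{j,n}$.

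Next I would expand $P_j(X)$, a monic polynomial of degree $n-1$ whose roots are $\{b^h : 0 \leq h < n,\ h \neq j\}$, in terms of elementary symmetric functions. The coefficient of $X^i$ in a monic product over $n-1$ roots equals $(-1)^{n-i-1}$ times the $(n-i-1)$-th elementary symmetric polynomial of those roots, and that symmetric polynomial is precisely
\begin{equation*}
\sum_{\substack{0 \leq h_1 < \cdots < h_{n-i-1} < n \\ h_1, \ldots, h_{n-i-1} \neq j}} b^{h_1 + \cdots + h_{n-i-1}} = \sigma_{n-i-1,j,n}(b),
\end{equation*}
directly by the definition of $\sigma$. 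Hence $[X^i]\, P_j(X) = (-1)^{n-i-1}\, \sigma_{n-i-1,j,n}(b)$.

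Finally, taking absolute values and using that $b > 1 > 0$ makes every summand of $\sigma_{n-i-1,j,n}(b)$ positive (so the power sum is nonnegative and the sign is harmless), I obtain $|c_{i,j,n}| = \sigma_{n-i-1,j,n}(b)/\pi_{j,n}$, which is exactly~\eqref{fund}. I do not anticipate any genuine obstacle: the only points demanding care are the index bookkeeping that pairs the coefficient of $X^i$ with the symmetric polynomial of degree $n-i-1$, and tracking the sign $(-1)^{n-i-1}$, which disappears upon taking absolute values.
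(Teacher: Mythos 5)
Your proposal is correct and follows essentially the same route as the paper: both apply Lemma~\ref{thm:lagrange} with $a_h = b^h$ and then expand the monic numerator via Vieta's formulas (equivalently, elementary symmetric polynomials) to identify the coefficient of $X^i$ with $\pm\sigma_{n-i-1,j,n}(b)$, after which absolute values eliminate the sign. The index bookkeeping you flag matches the paper's equation~\eqref{vieta} exactly.
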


\begin{proof}
By Lemma~\ref{thm:lagrange}, we have
\begin{equation*}
\prod_{\substack{0 \leq h < n \\ h \neq j}} \frac{X - b^h}{b^j - b^h}  =
\sum_{0 \leq i < n} c_{i,j,n} X^i .
\end{equation*}
which in turn, by Vieta's formulas, gives
\begin{equation}
c_{n-i-1,j,n} = (-1)^i \left( \prod_{\substack{0 \leq h < n \\ h \neq j}} \frac1{b^j - b^h} \right) \sum_{\substack{0 \leq h_1 < \cdots < h_i < n \\ h_1, \ldots, h_i \neq j}} b^{h_1 + \cdots + h_i}
\label{vieta}
\end{equation}
for $0 \leq i < n$.   The result now follows by the definitions
of $\sigma$ and $\pi$.
\end{proof}

Next, we obtain some inequalities for $\pi$.
\begin{lemma}
Define $n_0 = \lceil\log_b(1+\frac{1}{b})\rceil$.  Then
$$ \pi_{j,n} \leq \pi_{j+1,n} \quad \text{for $n_0 \leq j < n$} .$$
\label{pineq}
\end{lemma}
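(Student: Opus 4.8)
The plan is to compute the ratio $\pi_{j+1,n}/\pi_{j,n}$ in closed form and show that it is at least $1$ precisely when $b^{j+1} \geq b+1$, a condition that the threshold $n_0$ is designed to guarantee. (Throughout I assume $n_0 \leq j \leq n-2$, since $\pi_{j+1,n}$ is only defined for $j+1 < n$.) First I would note that the factor $|b^{j}-b^{j+1}| = |b^{j+1}-b^{j}|$ occurs in both products, so after cancellation
\begin{equation*}
\frac{\pi_{j+1,n}}{\pi_{j,n}} = \prod_{\substack{0 \leq h < n \\ h \neq j,\, j+1}} \frac{|b^{j+1}-b^h|}{|b^j - b^h|}.
\end{equation*}

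Next I would split this product according to whether $h < j$ or $h > j+1$ and show that each part telescopes. For $h < j$, writing $k = j-h$, each factor becomes $\frac{b^{k+1}-1}{b^k - 1}$, so the product over $h = 0, \dots, j-1$ telescopes to $\frac{b^{j+1}-1}{b-1}$. For $h > j+1$, writing $k = h - j$, each factor becomes $b \cdot \frac{b^{k-1}-1}{b^k-1}$, and the product over $h = j+2, \dots, n-1$ telescopes to $b^{\,n-2-j} \cdot \frac{b-1}{b^{n-1-j}-1}$. Multiplying the two pieces gives the clean identity
\begin{equation*}
\frac{\pi_{j+1,n}}{\pi_{j,n}} = b^{\,n-2-j} \cdot \frac{b^{j+1}-1}{b^{n-1-j}-1}.
\end{equation*}
(One should check that these formulas remain correct in the degenerate cases $j=0$ and $j=n-2$, where one of the two products is empty; both reduce to $1$, as they must.)

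Finally, since $b^{n-1-j}-1 > 0$, the desired inequality $\pi_{j,n} \leq \pi_{j+1,n}$ is equivalent to $b^{\,n-2-j}(b^{j+1}-1) \geq b^{n-1-j}-1$, which rearranges to $b^{n-1}+1 \geq b^{\,n-2-j}(b+1)$ and, after dividing by $b^{\,n-2-j}$, to $b^{j+1} + b^{\,j+2-n} \geq b+1$. The term $b^{\,j+2-n}$ is positive, so it suffices to prove $b^{j+1} \geq b+1$. This is exactly where $n_0$ enters: by definition $n_0 \geq \log_b(1+\tfrac1b)$, hence $b^{n_0} \geq 1+\tfrac1b = (b+1)/b$ and so $b^{n_0+1} \geq b+1$; since $j \geq n_0$ this yields $b^{j+1} \geq b^{n_0+1} \geq b+1$, completing the argument. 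I expect the only real obstacle to be the bookkeeping in the two telescoping products; once the closed form for the ratio is in hand, the reduction to $b^{j+1} \geq b+1$ and its verification from the definition of $n_0$ are immediate.
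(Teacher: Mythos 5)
Your proposal is correct and follows essentially the same route as the paper: both compute the ratio $\pi_{j+1,n}/\pi_{j,n}$ in closed form (the paper via the index shift $h \mapsto h-1$ and the factor $b^{n-1}$, you via cancellation and telescoping, arriving at the same expression $b^{n-2-j}(b^{j+1}-1)/(b^{n-1-j}-1)$) and then reduce the inequality to $b^{j+1} \geq b+1$, equivalently $b^{j} \geq 1+\tfrac1b$, which is exactly what $j \geq n_0$ guarantees. Your explicit handling of the restriction $j \leq n-2$ and of the degenerate cases is a small point of extra care, not a difference in method.
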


\begin{proof}
For $0 \leq j < n - 1$, we have
\begin{align*}
\pi_{j+1,n} := \prod_{\substack{0 \leq h < n \\ h \neq j + 1}} |b^{j+1} - b^h| = b^{n-1}\prod_{\substack{0 \leq h < n \\ h - 1 \neq j}} |b^{j} - b^{h-1}| = \frac{b^{n+j-1} - b^{n-2}}{b^{n-1} - b^j} \pi_{j,n} .
\end{align*}
A quick computation shows that the inequality
\begin{equation*}
\frac{b^{n+j-1} - b^{n-2}}{b^{n-1} - b^j} \geq 1
\end{equation*}
is equivalent to
\begin{equation*}
b^j \geq \frac{b^{n-1} + b^{n-2}}{b^{n-1} + 1} .
\end{equation*}
Let $n_0$ be the minimum positive integer such that
$b^{n_0} \geq 1 + \frac1{b}$.  Then $ n_0 = \lceil\log_b(1+\frac{1}{b})\rceil$.
Hence, for $n_0 \leq j < n$, we have
\begin{equation*}
b^j \geq 1 + \frac1{b} > \frac{b^{n-1} + b^{n-2}}{b^{n-1} + 1} ,
\end{equation*}
so that 
\begin{equation}
\pi_{j,n} \leq \pi_{j+1,n} \quad \text{for $n_0 \leq j < n$}.
\label{pi-ineq}
\end{equation}
\end{proof}

Finally, we have the easy
\begin{lemma}
For $0 \leq i , j < n$ we have 
$c_{i,j,n} = c_{j,i,n}$.
\label{symm}
\end{lemma}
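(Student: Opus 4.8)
The plan is to observe that the statement is simply the symmetry of the matrix $V(b^0,b^1,\ldots,b^{n-1})^{-1}$, and to deduce this from the symmetry of $V(b^0,b^1,\ldots,b^{n-1})$ itself. First I would compute the $(i,j)$ entry of $V = V(b^0,b^1,\ldots,b^{n-1})$. By the definition of the Vandermonde matrix, its $i$-th row is $(1, a_i, a_i^2, \ldots, a_i^{n-1})$ with $a_i = b^i$, so the $(i,j)$ entry equals $a_i^{\,j} = (b^i)^j = b^{ij}$. Since $b^{ij} = b^{ji}$, this entry is invariant under exchanging $i$ and $j$, and hence $V = V^{\mathsf T}$.

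Second, I would invoke the elementary fact that the inverse of an invertible symmetric matrix is symmetric: from $V = V^{\mathsf T}$ we get $V^{-1} = (V^{\mathsf T})^{-1} = (V^{-1})^{\mathsf T}$. Reading off the $(i,j)$ entry of this identity gives exactly $c_{i,j,n} = c_{j,i,n}$, which is the claim. Here the invertibility of $V$ is guaranteed because the nodes $b^0, b^1, \ldots, b^{n-1}$ are pairwise distinct for $b > 1$, as recalled in the introduction.

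I expect no genuine obstacle: the entire content is the single observation that, because the interpolation nodes are themselves the powers $b^i$, the Vandermonde matrix coincides with the symmetric matrix $[b^{ij}]_{0 \leq i,j < n}$. For completeness one could instead try to verify $c_{i,j,n} = c_{j,i,n}$ directly from the explicit formula \eqref{fund}, i.e.\ by matching $\sigma_{n-i-1,j,n}/\pi_{j,n}$ with $\sigma_{n-j-1,i,n}/\pi_{i,n}$; but this symbolic comparison is strictly more laborious than the structural argument and yields no extra insight, so I would present the symmetry argument as the proof.
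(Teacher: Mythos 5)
Your argument is exactly the paper's: the matrix $V(b^0,b^1,\ldots,b^{n-1}) = [b^{ij}]_{0 \leq i,j < n}$ is symmetric, hence so is its inverse. You have merely spelled out the two standard facts the paper leaves implicit, so the proposal is correct and takes essentially the same approach.
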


\begin{proof}
$V(b^0, b^1,\ldots, b^{n-1})$ is a symmetric matrix, so its
inverse is also.
\end{proof}

\section{Proof of Theorem~\ref{thm1}}

\begin{proof}
Suppose $i, j \geq n_0$.  Then
\begin{align*}
|c_{i,j,n}| &= \frac{\sigma_{n-i-1,j,n}}{\pi_{j,n}}  \quad
	\text{(by \eqref{fund})} \\
& \leq \frac{\sigma_{n-i-1,n_0,n}}{\pi_{j,n}}  \quad
	\text{(by Lemma~\ref{lem:sigma} (a))} \\
& \leq \frac{\sigma_{n-i-1,n_0,n}}{\pi_{n_0,n}}  \quad
        \text{(by Lemma~\ref{pineq})} \\
& = |c_{i,n_0,n}| \quad \text{(by \eqref{fund})} ,
\end{align*}
and so we get
\begin{equation}
|c_{i,j,n}| \leq |c_{i,n_0,n}| .
\label{ine}
\end{equation}
But 
\begin{equation}
c_{i,n_0, n} = c_{n_0, i, n}
\label{ine2}
\end{equation}
by Lemma~\ref{symm}.
Make the substitutions
$n_0$ for $i$ and $i$ for $j$ in \eqref{ine} to get
\begin{equation}
|c_{n_0,i,n}| \leq |c_{n_0, n_0, n}|.
\label{ine3}
\end{equation}
The result now follows by combining Eqs.~\eqref{ine}, \eqref{ine2}, and
\eqref{ine3}.
\end{proof}

\section{Proof of Theorem~\ref{thm2}}

\begin{proof}
Since $b \geq \tau$, it follows that $b \geq 1 + 1/b$.  
Hence in Theorem~\ref{thm1} we can take $n_0 = 1$, and this
gives $M_b (n) \in
\{ |c_{0,0,n}|, |c_{1,0,n}|, |c_{0,1,n}|, |c_{1,1,n}| \}$.
However, by explicit calculation, we have
\begin{align*}
\sigma_{n-1,1,n} &= b^{ n(n-1)/2 - 1} \\
\sigma_{n-2,1,n} &= b^{ n(n-1)/2 - 1} + \sum_{(n-1)(n-2)/2 - 1 \leq
i \leq n(n-1)/2 - 3 } b^i,
\end{align*}
so that
\begin{equation}
\sigma_{n-1,1,n} \leq \sigma_{n-2,1,n}.
\label{n12}
\end{equation}
Hence
\begin{align*}
|c_{1,0,n}| &= |c_{0,1,n}| \quad \text{(by Lemma~\ref{symm})} \\
&= \frac{\sigma_{n-1,1,n}}{\pi_{1,n}} \quad
	\text{(by \eqref{fund})} \\
&\leq \frac{\sigma_{n-2,1,n}}{\pi_{1,n}}  \quad \text{(by \eqref{n12})} \\
&= |c_{1,1,n}| \quad \text{(by \eqref{fund})},
\end{align*}
and the result follows.
\end{proof}

\section{Proof of Theorem~\ref{thm:existence}}

\begin{proof}
We have
\begin{align*}
|c_{i,j,n}| &= \frac{\sigma_{n-i-1,j,n}}{\pi_{j,n}} \\
&=
\frac{\sigma_{n-i-1,j,n} (b)}{\prod_{\substack{0 \leq h < n \\ h \neq j}} |b^j - b^h|} \\
&= \frac{\sigma_{n-i-1,j,n} (b)}{\prod_{\substack{0 \leq h < n \\ h \neq j}} \ (b^h \cdot |b^{j-h} - 1|)} \\
&= \frac{\sigma_{n-i-1,j,n} (b)}{b^{n(n-1)/2 - j}} \cdot \frac{1}{\prod_{\substack{0 \leq h < n \\ h \neq j}} |b^{j-h} - 1|} \\
&= \sigma_{i,j,n}(b^{-1}) \frac{1}{\prod_{\substack{0 \leq h < n \\ h \neq j}} |b^{j-h} - 1|},
\end{align*}
where the equality
$$\frac{\sigma_{n-i-1,j,n} (b)}{b^{n(n-1)/2 - j}}  = \sigma_{i,j,n}(b^{-1})$$
arises from the one-to-one correspondence between the subsets of $\{0,1,\ldots, n-1\} - \{j\}$
of cardinality $i$ and those of cardinality $n-1-i$.

For $x < 1$ define
$$ \sigma_{i,j,\infty} (x)  =
\sum_{\substack{0 \leq h_1 < \cdots < h_i < \infty
\\ h_1, \ldots, h_i \neq j}} \frac1{x^{h_1 + \cdots + h_i}}.$$

Hence the limits
\begin{align}
\ell_{i,j} & := \lim_{n \to +\infty} |c_{i,j,n}|  \nonumber \\
&= \lim_{n \to +\infty} \sigma_{i,j,n}(b^{-1}) \frac{1}{\prod_{\substack{0 \leq h < n \\ h \neq j}} |b^{j-h} - 1|} \nonumber \\
&= \sigma_{i,j,\infty} (b^{-1}) \left(\prod_{1 \leq s \leq j} \frac1{b^s - 1} \right) \left(\prod_{t \geq 1}  \frac1{1 - b^{-t}} \right) 
\label{limits}
\end{align}
exist and are finite.

From Theorem~\ref{thm1} we see that
\begin{equation*}
\lim_{n \to +\infty} M_b(n) = 
\max_{0 \leq i \leq j < n_0} \ 
\lim_{n \to +\infty} |c_{i,j,n}|
= \max_{0 \leq i \leq j \leq n_0}\ \ell_{i,j} ,
\end{equation*}
and the proof is complete.
\end{proof}

From this theorem we can explicitly compute 
$\lim_{n \to +\infty} M_b(n)$ for $b \geq \tau$.

\begin{corollary}
Let $\alpha \dot= 2.324717957$ be the real zero of
the polynomial $X^3-3X^2+2X-1$.
\begin{enumerate}[(a)]   
\item If $b \geq \alpha$, then
 $\lim_{n \rightarrow \infty} M_b (n) = \prod_{t \geq 1} (1-b^{-t})^{-1}$.
 
 \item If $\tau \leq b \leq \alpha$, then
 $\lim_{n \rightarrow \infty} M_b (n) = 
 \frac{b^2-b+1}{b(b-1)^2} \prod_{t \geq 1} (1-b^{-t})^{-1}$.
 \end{enumerate}
\end{corollary}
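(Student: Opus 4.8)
The plan is to combine Theorem~\ref{thm2} with the limit formula \eqref{limits}. Since $b \geq \tau$, Theorem~\ref{thm2} tells us that $M_b(n) \in \{|c_{0,0,n}|, |c_{1,1,n}|\}$ for all $n \geq 2$, so by Theorem~\ref{thm:existence} we obtain $\lim_{n \to \infty} M_b(n) = \max(\ell_{0,0}, \ell_{1,1})$. It therefore suffices to evaluate these two limits explicitly and determine which one dominates as a function of $b$.

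First I would compute the two limits from \eqref{limits}. For $\ell_{0,0}$ we have $i = j = 0$: the sum $\sigma_{0,0,\infty}(b^{-1})$ is a sum over the single empty subset and equals $1$, and the product $\prod_{1 \leq s \leq 0}$ is empty, so $\ell_{0,0} = \prod_{t \geq 1}(1 - b^{-t})^{-1}$. For $\ell_{1,1}$ we have $i = j = 1$, and since $b^{-1} < 1$ the infinite sum converges:
\begin{equation*}
\sigma_{1,1,\infty}(b^{-1}) = \sum_{\substack{h \geq 0 \\ h \neq 1}} b^{-h} = \frac{1}{1 - b^{-1}} - b^{-1} = \frac{b^2 - b + 1}{b(b-1)} .
\end{equation*}
Together with $\prod_{1 \leq s \leq 1}(b^s - 1)^{-1} = (b-1)^{-1}$ this gives $\ell_{1,1} = \frac{b^2 - b + 1}{b(b-1)^2}\prod_{t \geq 1}(1 - b^{-t})^{-1}$. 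Note that both limits already match the two expressions appearing in parts~(a) and~(b), so the remaining task is purely to locate the threshold at which the maximum switches.

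Finally I would compare $\ell_{0,0}$ and $\ell_{1,1}$. They share the positive common factor $\prod_{t \geq 1}(1 - b^{-t})^{-1}$, so $\ell_{0,0} \geq \ell_{1,1}$ if and only if $1 \geq \frac{b^2 - b + 1}{b(b-1)^2}$. Since $b > 1$ makes $b(b-1)^2 > 0$, clearing denominators shows this is equivalent to $b(b-1)^2 - (b^2 - b + 1) = b^3 - 3b^2 + 2b - 1 \geq 0$, i.e., to $p(b) \geq 0$ where $p(X) = X^3 - 3X^2 + 2X - 1$. To conclude that this is a clean threshold at $\alpha$, I would check that $p$ has a unique real root: its derivative $p'(X) = 3X^2 - 6X + 2$ has roots $1 \pm \tfrac{1}{\sqrt{3}}$, and $p$ is negative at both of these critical points, so $p$ crosses zero exactly once, at $\alpha \doteq 2.324717957$, with $p(b) \geq 0 \iff b \geq \alpha$. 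This yields case~(a) for $b \geq \alpha$ (where $\ell_{0,0}$ wins) and case~(b) for $\tau \leq b \leq \alpha$ (where $\ell_{1,1}$ wins). I do not expect any serious obstacle here: the only genuine care needed is the correct evaluation of the infinite sum $\sigma_{1,1,\infty}(b^{-1})$ and the verification that $\alpha$ is the unique real zero of $p$, so that the comparison reduces to the single sign condition $p(b) \gtrless 0$.
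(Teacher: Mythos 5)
Your proposal is correct and follows essentially the same route as the paper: use Theorem~\ref{thm2} to reduce to comparing $\ell_{0,0}$ and $\ell_{1,1}$, evaluate both via \eqref{limits}, and locate the crossover at the real root of $X^3-3X^2+2X-1$. Your extra verification that this cubic has a unique real zero (via the sign of $p$ at its critical points) is a worthwhile detail the paper leaves implicit.
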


\begin{proof}
From Theorem~\ref{thm2} we know that for
$b \geq \tau$ we have 
$\lim_{n \rightarrow \infty} M_b (n) \in \{ \ell_{0,0}, \ell_{1,1} \} $.   Now an easy
calculation based on \eqref{limits} shows that
\begin{align*}
\ell_{0,0} &= \prod_{t \geq 1} (1-b^{-t})^{-1} \\
\ell_{1,1} &= \frac{b^2-b+1}{b(b-1)^2} \  \prod_{t \geq 1} (1-b^{-t})^{-1} .
\end{align*}
By solving the equation
$\frac{b^2-b+1}{b(b-1)^2} = 1$,
we see that for $b \geq \alpha$ we have
$\ell_{0,0} \geq \ell_{1,1}$, while
if $\tau \leq b \leq \alpha$ we have
$\ell_{1,1} \geq \ell_{0,0}$.  This proves
both parts of the claim.
\end{proof}

\begin{remark}
The quantity $M_b (n)$ converges rather slowly to its limit when $b$ is close to $1$.   The
following table gives some numerical estimates for $M_b (n)$.
\begin{table}[H]
\begin{center}
\begin{tabular}{c|l}
$b$ & $\lim_{n \rightarrow \infty} M_b (n)$ \\
\hline
3 & 1.785312341998534190367486\\
$\alpha \doteq 2.3247$ & 2.4862447382651613433\\
2 & 5.194119929182595417 \\
$\tau \doteq 1.61803$ & 26.788216012030303413\\
1.5 & 67.3672156 \\
1.4 & 282.398 \\
1.3 & 3069.44 \\
1.2 & 422349.8 
\end{tabular}
\end{center}
\end{table}
\end{remark}

\section{Final remarks}

We close with a conjecture we have been unable to prove.

\begin{conjecture}
Let $b > 1$ and
$n_0 = \lceil\log_b(1+\frac{1}{b})\rceil$. 
Then, for all sufficiently large $n$, we have
$M_b (n) = |c_{i,i,n}|$ for some $i$, $0 \leq i \leq n_0$.
\label{thm:diagonal}
\end{conjecture}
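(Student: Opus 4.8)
The plan is to pass to the limit, reduce the conjecture to a total-positivity statement about the numbers $\sigma_{i,j,\infty}(b^{-1})$, and then transfer back to finite $n$. By Theorem~\ref{thm1} and Lemma~\ref{symm} we have $M_b(n) = \max_{0 \le i \le j \le n_0} |c_{i,j,n}|$, and by \eqref{limits} each $|c_{i,j,n}|$ converges to $\ell_{i,j} = \sigma_{i,j,\infty}(b^{-1})\,P_j\,Q$, where $P_j = \prod_{1 \le s \le j}(b^s-1)^{-1}$ and $Q = \prod_{t \ge 1}(1-b^{-t})^{-1}$ does not depend on $i$ or $j$. Writing $\Lambda_{i,j} := \ell_{i,j}/Q = \sigma_{i,j,\infty}(b^{-1})\,P_j$, the symmetry $c_{i,j,n} = c_{j,i,n}$ forces $\Lambda_{i,j} = \Lambda_{j,i}$. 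So it suffices to show that the symmetric array $(\Lambda_{i,j})$, restricted to $0 \le i,j \le n_0$, attains its maximum on the diagonal, and that it does so strictly (no off-diagonal entry equals the maximum); the convergence in \eqref{limits} then yields $M_b(n) = \max_{0 \le k \le n_0} |c_{k,k,n}|$ for all large $n$, which is the conjecture.

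The key structural claim I would isolate is that the array $\sigma_{i,j,\infty}(x)$ is (strictly) log-supermodular in $(i,j)$ for every $0 < x < 1$:
\begin{equation*}
\sigma_{i,j,\infty}(x)\,\sigma_{i+1,j+1,\infty}(x) \;>\; \sigma_{i,j+1,\infty}(x)\,\sigma_{i+1,j,\infty}(x) \qquad (i,j \ge 0).
\end{equation*}
Since $P_j$ depends on $j$ alone, it cancels from the mixed ratio, so this is equivalent to the same inequality for $\Lambda_{i,j}$; and by telescoping the adjacent inequalities one obtains the full rectangular (Monge) inequality $\Lambda_{i,j}\Lambda_{i',j'} > \Lambda_{i,j'}\Lambda_{i',j}$ for all $i < i'$, $j < j'$. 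Granting this, Topkis's monotonicity theorem shows that the $j$ maximizing $\Lambda_{i,j}$ is a strictly increasing function of $i$. Combined with $\Lambda_{i,j} = \Lambda_{j,i}$, a short argument excludes any off-diagonal maximizer: if the maximum were attained at $(a,b)$ with $a<b$, then row $a$ would be maximized at $b$ and, by symmetry, row $b$ at $a$, contradicting that the maximizing column strictly increases with the row. Hence the maximum of $\Lambda$ lies strictly on the diagonal, as required.

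To prove the log-supermodularity I would use the generating function $\sum_{i \ge 0} \sigma_{i,j,\infty}(x)\,y^i = \prod_{h \ge 0,\, h \ne j}(1 + x^h y)$, so that columns $j$ and $j+1$ are related by $\bigl(1 + x^{j+1}y\bigr)\sum_i \sigma_{i,j+1,\infty}(x)\,y^i = \bigl(1 + x^{j}y\bigr)\sum_i \sigma_{i,j,\infty}(x)\,y^i$. Each column is the coefficient sequence of a product with only negative real zeros $-x^{-h}$, hence a P\'olya frequency sequence, and consecutive columns differ by the ordered factors $x^{j} > x^{j+1}$; the desired $2\times 2$ inequality is exactly the statement that these sequences are ordered in the sense of total positivity. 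Equivalently, one can seek a weight-preserving injection from the pairs counted by the right-hand product into those counted by the left-hand product, in the spirit of the bijection used in Lemma~\ref{lem:sigma}: given an $(i{+}1)$-subset avoiding $j$ together with an $i$-subset avoiding $j+1$, one must redistribute the elements into an $i$-subset avoiding $j$ and an $(i{+}1)$-subset avoiding $j+1$ with the same total, handling the interaction of the two forbidden values $j$ and $j+1$.

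I expect this last step to be the main obstacle. The cancellation of $P_j$, the Topkis argument, the symmetry reduction, and the finite-$n$ transfer are all routine once the inequality is in hand, since only finitely many entries are involved. The difficulty is entirely concentrated in proving the log-supermodularity of $\sigma_{i,j,\infty}(x)$ uniformly in $i,j$ and in $0 < x < 1$, and in particular in controlling its strictness: an equality in one of these minors, should it occur for a special value $x = b^{-1}$, would create a limiting tie between a diagonal and an off-diagonal entry and would require a finer, second-order analysis of $|c_{i,j,n}|$ to decide the winner for large $n$. This is presumably the point at which a complete proof has so far been elusive.
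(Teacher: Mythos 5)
You should first note that the statement you set out to prove is not a theorem of the paper at all: it is Conjecture~\ref{thm:diagonal}, which the authors explicitly say they have been unable to prove, so there is no proof in the paper to measure yours against. Your proposal does not close the question either, and to your credit you say so: everything hinges on the strict log-supermodularity (Monge/TP${}_2$) inequality $\sigma_{i,j,\infty}(x)\,\sigma_{i+1,j+1,\infty}(x) > \sigma_{i,j+1,\infty}(x)\,\sigma_{i+1,j,\infty}(x)$ for $0<x<1$, which you motivate via generating functions and P\'olya frequency sequences but never establish. Until that inequality is proved, together with its \emph{strictness} at the particular value $x=b^{-1}$, what you have is a conditional reduction, not a proof; a non-strict version would leave open a limiting tie between a diagonal and an off-diagonal entry, and then the convergence argument in \eqref{limits} says nothing about which one wins for large finite $n$.

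That said, the reduction itself is sound and is genuinely useful scaffolding. By Theorem~\ref{thm1} and Lemma~\ref{symm} only the finitely many pairs $0\le i\le j\le n_0$ matter, the factor $P_j$ indeed cancels from every $2\times 2$ minor of $\Lambda_{i,j}=\sigma_{i,j,\infty}(b^{-1})P_j$, and the transfer from the limits $\ell_{i,j}$ back to finite $n$ is legitimate precisely because the index set is finite. You also do not need Topkis: applying the strict minor inequality to the square with rows $a<c$ and columns $a<c$, together with the symmetry $\Lambda_{a,c}=\Lambda_{c,a}$, gives $\Lambda_{a,a}\Lambda_{c,c}>\Lambda_{a,c}^2$, so some diagonal entry strictly dominates any off-diagonal one. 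One further caution: the paper's displayed definition of $\sigma_{i,j,\infty}(x)$ contains a reciprocal under which the series diverges as written; you are implicitly using the correct reading (the limit of $\sigma_{i,j,n}(b^{-1})$ as $n\to\infty$), but you should state that definition explicitly rather than lean on the paper's. The entire mathematical content of the conjecture remains concentrated in the unproven total-positivity claim, exactly where you located it.
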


\end{document}